% ----------------------------------------------------------------
% AMS-LaTeX Paper ************************************************
% **** -----------------------------------------------------------
\documentclass[11pt]{amsart}
\usepackage{amsmath,amssymb,graphicx,latexsym,mathrsfs,hyperref,verbatim}
\usepackage[all]{xy}
\usepackage[bbgreekl]{mathbbol}
% ----------------------------------------------------------------
\setlength{\textheight}{8.5in} \setlength{\textwidth}{6.2in}
\oddsidemargin 0in \evensidemargin 0in
% ----------------------------------------------------------------
\vfuzz2pt % Don't report over-full v-boxes if over-edge is small
\hfuzz2pt % Don't report over-full h-boxes if over-edge is small
% THEOREMS -------------------------------------------------------
\newtheorem{thm}{Theorem}[section]
\newtheorem{cor}[thm]{Corollary}

\newtheorem{prop}[thm]{Proposition}
\theoremstyle{definition}
\newtheorem{defn}[thm]{Definition}
\newtheorem{example}[thm]{Example}
\theoremstyle{remark}
\newtheorem{rem}[thm]{Remark}

\numberwithin{equation}{section}
% MATH -----------------------------------------------------------
\def\Cb{{\mathbb C}}\def\Rb{{\mathbb R}}\def\Tb{{\mathbb T}}\def\Zb{{\mathbb Z}}

\def\hf{{\mathfrak h}}
\def\Cc{{\mathcal C}}\def\Tc{{\mathcal T}}

\DeclareMathOperator{\Aut}{Aut}
\DeclareMathOperator{\End}{End}
\DeclareMathOperator{\Hom}{Hom}
\DeclareMathOperator{\Hilb}{Hilb}
\DeclareMathOperator{\Vect}{Vect}

\DeclareMathOperator{\Reps}{Reps}
\DeclareMathOperator{\Flat}{Flat}

\DeclareMathOperator{\Irreps}{Irreps}

\DeclareMathOperator{\Arclength}{Arclength}
\DeclareMathOperator{\Objects}{Objects}

\newcommand{\isom}[1]{\stackrel{#1}{\hookrightarrow}}
\newcommand{\tof}[1]{\stackrel{#1}{\longrightarrow}}
\newcommand{\Tof}[1]{\stackrel{#1}{\rightarrow}}

\newcommand{\dreps}{{d^{\Reps}}}
\newcommand{\dbar}{\overline{d}}

\newcommand{\Lreps}{{\ell^{\Reps}}}
\newcommand{\wh}{\widehat}
\newcommand{\Ldd}{{\ell^{dd}}}
\newcommand{\Uh}{{U\!(\hf\!)}}
\newcommand{\LUh}{\ell^{\Uh}}

\newcommand{\dFlat}{d^{\Flat}}
% ----------------------------------------------------------------
\begin{document}
\title{Metric Tannakian duality}
\author{Calder Daenzer}
\thanks{This work was supported by the Alexander von Humboldt Foundation and NSF grant DMS-0703718.}
\address{Institute of Mathematics, Bunsenstrasse 3-5, G\"{o}ttingen, 37073 Germany}%
\email{calder.daenzer@gmail.com}
% ----------------------------------------------------------------
\maketitle
\begin{abstract}  We incorporate metric data into the framework of Tannaka-Krein duality.  Thus, for any group with left invariant metric, we produce a dual metric on its category of unitary representations.  We characterize the conditions under which a``double-dual" metric on the group may be recovered from the metric on representations, and provide conditions under which a metric agrees with its double-dual.  We also consider some applications to T-duality and quantum Gromov-Hausdorff distance.
\end{abstract}
\tableofcontents
%-------------------------------------------------------
\section{Introduction}
In this note we show how a left invariant metric on a group induces a notion of distance between its representations, and conversely, how a distance function on a group's representations induces a (semi-)metric on the group.   Applying the two methods in succession to a metric group produces a new ``double-dual" (semi-)metric on the group; we characterize the conditions under which the original metric agrees with its double-dual.  We also show that this procedure of producing new metrics stabilizes, in the sense that the double-dual and the double-double-dual metric are equal.

In the case of compact topological groups, these constructions can be viewed as the incorporation of metric data into the Tannaka duality \cite{Ta}:
\[ G\longleftrightarrow \Reps(G) \]
between a group and its category of finite dimensional complex representations.

Alternatively, one might view the constructions as a group theoretic analogue of the duality introduced by Connes \cite{Co}:
\[ (X,d)\longleftrightarrow (C(X),L_d) \]
between a locally compact metric space $(X,d)$ and the commutative $C^*$-algebra with Lip-norm $(C(X),L_d)$.

In our view the theory is interesting because of two types of applications.  First, there exist subtle relationships between groups which can only be expressed as equivalences of categories associated to them (we have in mind T-duality, Fourier-Mukai equivalence, and some cases of mirror symmetry), so if metric data can be encoded in the relevant categories, it can be transported across the equivalences.  The second (and more speculative) application is that Rieffel's quantum Gromov-Hausdorff distance \cite{Rie1} can be used to define a metric on the representations of a compact Lie group.  Comparing this with our type of metric on representations, one might extract new information about quantum-Gromov-Hausdorff convergence.  We will consider some of these applications in Section \eqref{S:Examples}.

The outline of the note is as follows: In Section \eqref{S:dreps} we define the distance between two representations of a metric group.  This distance depends on a certain choice, and we examine two such choices--- one which is best suited for finite dimensional Lie group representations, and another which applies for arbitrary representations.  In Section \eqref{S:Ldd} we define the double-dual metric.  We then generalize this in the spirit of Tannaka-Krein duality: for a category $\Tc$ with a metric on its objects and a functor to Hilbert spaces $\Tc\tof{F}\Hilb$, we produce a (possibly infinite) semi-metric on the group $\Aut F$ of natural equivalences of $F$.  The last section is devoted to examples and applications.

\underline{Acknowledgements} I would like to thank Peter Dalakov, Thomas Schick, and Marc Rieffel for the many useful interactions they provided.
\section{The metric on Representations}\label{S:dreps}

Let $G$ be a group with \textbf{semi-length} function $\ell$.  Thus $\ell$ is a function $G\to [0,\infty]$ satisfying $\ell(id_G)=0$, $\ell(g)=\ell(g^{-1})$, and $\ell(gg')\leq\ell(g)+\ell(g')$.  Semi-length functions correspond to left invariant semi-metrics on $G$ via
\[ \ell(g)= d(id_G,g)\ \longleftrightarrow\ d(g,g')=\ell(g^{-1}g'). \]
If a semi-length function vanishes only at the identity then it is called a \textbf{length} function; length functions correspond to left invariant metrics.  By default our semi-lengths and semi-metrics are allowed to take infinite values, but if they do not we refer to them as \textbf{finite}.

Now let us define the distance $\dreps(\rho,\sigma)$ between two representations of a group with semi-length function.  The result will depend on a choice, which we make now, of an infinite dimensional  Hilbert space $\hf$ and a bi-invariant metric $d^{\Uh}$ on its group $\Uh$ of unitary operators.  Two such choices are examined at the end of the section.

First, given two representations $\rho,\sigma:G\to\Uh$, form the distance:
\[ \dbar(\rho,\sigma):=\sup_{\ell(g)\neq 0}\frac{d^\Uh(\rho_g,\sigma_g)}{\ell(g)}. \]
This function is symmetric and bi-invariant because $d^\Uh$ is so, and it satisfies the triangle inequality as follows immediately from:
\[ \sup_{\ell(g)\neq 0}\frac{d^\Uh(\rho_g,\tau_g)+d^\Uh(\tau_g,\sigma_g)}{\ell(g)}
 \leq\sup_{\ell(g)\neq 0}\frac{d^\Uh(\rho_g,\tau_g)}{\ell(g)}+\sup_{\ell(g')\neq 0}\frac{d^\Uh(\tau_{g'},\sigma_{g'})}{\ell(g')}. \]

But this is not yet the desired semi-metric for representations.  We would like to enforce that unitarily equivalent representations have zero distance from each other.  Also, we need a notion of distance between two representations whose underlying Hilbert spaces are not equal.  Thus, for representations $\rho:G\to U(V_\rho)$ and $\sigma:G\to U(V_\sigma)$, we set
\begin{equation}\label{E:dreps} \dreps(\rho,\sigma):= \inf_{V_\rho\isom{a}\hf,\ V_\sigma\isom{b}\hf}\dbar(a_*\rho,b_*\sigma). \end{equation}
Here $a,b$ label the (not necessarily surjective) isometries into $\hf$, and  $a_*\rho$ denotes the representation on $\hf$ obtained by ``adding 1's on the diagonal.'' (More precisely, decompose $\hf$ as $\hf=a(V_\rho)\oplus a(V_\rho)^\perp$, then $a_*(\rho)|_{a(V_\rho)}:=a\rho a^{-1}$ and $a_*(\rho)|_{a(V_\rho)^\perp}:=id_{a(V_\rho)^\perp}$.)

But once two isometries $V_\rho\isom{a}\hf$ and $V_\sigma\isom{b}\hf$ have been chosen, all other isometries can be obtained by composing with elements of $U(\hf)$.  Thus it is clear that
\begin{equation}\label{E:dreps2}  \dreps(\rho,\sigma)\equiv\inf_{u,v\in\Uh}\dbar(u(a_*\rho) u^*,v(b_*\sigma) v^*). \end{equation}
Of course this does not depend on the initial choice of $a$ and $b$, so they will usually be omitted.  Also, because $\dbar$ is bi-invariant gives it is only necessary to take the infimum in one variable.  Thus
\begin{equation}\label{E:dreps3}  \dreps(\rho,\sigma)\equiv\inf_{u\in\Uh}\dbar(a_*\rho,u(b_*\sigma) u^*). \end{equation}

Note that if the Hilbert space $V_\rho$ of some representation has dimension greater than $dim(\hf)$ so that there are no isometries $V_\rho\to \hf$, then Equation \eqref{E:dreps} is an infimum over the empty set which equals $+\infty$~\footnote{Infimums and supremums in our context are computed on subsets of $[0,+\infty]$, thus the infimum of the empty set is $+\infty$, while the supremum of the empty set is $0$.}.
So representations of dimension greater than $\dim(\hf)$ have infinite distance from all other representations.  They do not contribute any information for the purposes of metric duality, and we will tacitly exclude them.

With this in mind, we fix the following notation: $\Reps(G)$ denotes the category whose objects are unitary representations of $G$ on Hilbert spaces and whose arrows are G-module maps (arrows are automatically isometric).  $\Reps_{fin}(G)$ denotes the subcategory of finite dimensional representations.
\begin{defn}
A \textbf{semi-metric} on a category $\Cc$ is a semi-metric on $\Objects(C)$ for which isomorphic objects have zero distance from each other. We always assume that the objects form a set.
\end{defn}
\begin{rem} It might seem superfluous to mention categories at all since what we have defined is the same data as a metric on the set of isomorphism classes of objects.  Nonetheless, we find the terminology ``metric on a category'' worthwhile because it is functors which are used to transport metric data from one category to another.
\end{rem}
\begin{prop} The function $\dreps$ is a semi-metric on $\Reps(G)$.\end{prop}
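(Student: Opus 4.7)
The plan is to verify the four conditions required for a semi-metric on a category: symmetry, vanishing on identical (and unitarily isomorphic) representations, the triangle inequality, and the auxiliary requirement that isomorphic objects have zero distance. Non-negativity and well-definedness up to the choice of embedding $a,b$ have already been noted in the text.

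First, \textbf{symmetry} $\dreps(\rho,\sigma)=\dreps(\sigma,\rho)$ is immediate from the symmetry of $d^\Uh$ (hence of $\dbar$) and the manifest symmetry of the definition in \eqref{E:dreps}.

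Second, for the \textbf{vanishing on isomorphic representations}, suppose $\rho$ and $\sigma$ are unitarily equivalent via an intertwiner $U\colon V_\rho\to V_\sigma$. Pick any isometry $b\colon V_\sigma\hookrightarrow\hf$ and set $a:=b\circ U$, which is an isometry $V_\rho\hookrightarrow\hf$ with the same image as $b$. A direct check shows $a_*\rho=b_*\sigma$ on $a(V_\rho)=b(V_\sigma)$ by the intertwining property, and both equal the identity on the orthogonal complement. Hence $\dbar(a_*\rho,b_*\sigma)=0$, forcing $\dreps(\rho,\sigma)=0$. Taking $\rho=\sigma$ and $U=\text{id}$ covers reflexivity.

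Third, for the \textbf{triangle inequality} $\dreps(\rho,\tau)\le\dreps(\rho,\sigma)+\dreps(\sigma,\tau)$, I would use form \eqref{E:dreps3}. Fix embeddings $a,b,c$ of $V_\rho,V_\sigma,V_\tau$ into $\hf$. Given $\varepsilon>0$, choose $u,v\in\Uh$ with
\[
\dbar(a_*\rho,u(b_*\sigma)u^*)<\dreps(\rho,\sigma)+\varepsilon,\qquad
\dbar(b_*\sigma,v(c_*\tau)v^*)<\dreps(\sigma,\tau)+\varepsilon.
\]
By bi-invariance of $\dbar$, conjugating the second inequality by $u$ yields $\dbar(u(b_*\sigma)u^*,\,uv(c_*\tau)v^*u^*)<\dreps(\sigma,\tau)+\varepsilon$. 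Applying the triangle inequality for $\dbar$ at the intermediate point $u(b_*\sigma)u^*$ gives
\[
\dbar\bigl(a_*\rho,\,(uv)(c_*\tau)(uv)^*\bigr)<\dreps(\rho,\sigma)+\dreps(\sigma,\tau)+2\varepsilon.
\]
Since $uv\in\Uh$, the left side bounds $\dreps(\rho,\tau)$ from above; letting $\varepsilon\to 0$ completes the step. The only case requiring a remark is when one of $\dreps(\rho,\sigma)$, $\dreps(\sigma,\tau)$ is $+\infty$ (in particular when $\dim V_\sigma>\dim\hf$, so $V_\sigma$ admits no isometry into $\hf$), in which case the inequality holds trivially.

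The main obstacle, such as it is, lies in the triangle inequality, where one must navigate the infima over three different unitary conjugations; the trick is precisely the bi-invariance of $\dbar$, which lets the two chosen unitaries compose into a single candidate $uv$ for the third infimum. Everything else is a direct unpacking of definitions.
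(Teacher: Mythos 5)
Your proof is correct and follows essentially the same route as the paper: the key step in both is the bi-invariance of $\dbar$, which lets the two unitaries chosen for the intermediate representation compose into a single candidate for the infimum defining $\dreps(\rho,\tau)$ (the paper phrases this as a reparametrization of the infimum, you as an $\varepsilon$-argument, but these are equivalent). Your explicit verification that unitarily equivalent representations are at distance zero, and your handling of the infinite-distance case, merely spell out steps the paper declares clear.
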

\begin{proof}
It is clear that $\dreps$ is symmetric and vanishes on unitarily equivalent representations.  Now we verify the triangle inequality using Formula \eqref{E:dreps3} and assuming (without loss of generality) that all representations are on the fixed space $\hf$.

Given representations $\rho,\tau,\sigma:G\to \Uh$ we have
\begin{align*}
\dreps(\rho,\tau)=\inf_{u\in\Uh}\dbar(\rho,u\sigma u^*)  &\leq \inf_{u,v\in\Uh}\dbar(\rho ,u\tau u^*)+\dbar(u\tau u^*,v\sigma v^*) \\
											&=\inf_{u,u^*v\in\Uh}\dbar(\rho ,u\tau u^*)+\dbar(\tau ,u^*v\sigma v^*u) \\
											&=\dreps(\rho,\tau)+\dreps(\tau,\sigma).
\end{align*}
\end{proof}

A homomorphism $\phi:(G,\ell_G)\to(H,\ell_H)$ between groups with semi-length function is called \textbf{nonexpanding} if $\ell_G(g)\geq\ell_H(\phi(g))$ for all $g\in G$.  As long as $\ell_H$ vanishes only at the identity, the assignment $(G,\ell)\mapsto (\Reps(G),\dreps)$ preserves nonexpansion:
\begin{prop}Suppose that $(G,\ell_G)\tof{\phi} (H,\ell_H)$ is a nonexpanding homomorphism of semi-length groups and $\ell_H$ is a length function. Then the pullback functor
\[ \phi^*:\Reps H\to \Reps G,\qquad (\rho:H\to U(V))\mapsto (\rho\circ\phi:G\to U(V)) \]
is also nonexpanding.
\end{prop}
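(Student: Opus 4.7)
The plan is to unravel the definition via Formula \eqref{E:dreps3} and show that, term by term, pulling back along $\phi$ can only decrease the relevant ratio. Concretely, fix representations $\rho,\sigma:H\to U(V)$ on some Hilbert space $V$. Since $\phi^*\rho$ and $\phi^*\sigma$ act on the \emph{same} Hilbert space $V$, a single isometry $V\hookrightarrow\hf$ serves both $\rho$ and $\phi^*\rho$, so we may fix embeddings once and for all and compare the resulting $\dbar$'s for the same unitary $u\in\Uh$.

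First I would compare $\dbar$ pointwise. For any $u\in\Uh$ and any $g\in G$ with $\ell_G(g)\neq 0$, I want to bound
\[ \frac{d^\Uh\bigl(a_*(\phi^*\rho)_g,\ u (b_*(\phi^*\sigma))_g u^*\bigr)}{\ell_G(g)}
  =\frac{d^\Uh\bigl(a_*\rho_{\phi(g)},\ u b_*\sigma_{\phi(g)} u^*\bigr)}{\ell_G(g)} \]
by the corresponding supremum defining $\dbar_H(\rho,u\sigma u^*)$. There are two cases. If $\ell_H(\phi(g))\neq 0$, the nonexpansion hypothesis $\ell_G(g)\geq \ell_H(\phi(g))$ makes the denominator on the left at least as large as $\ell_H(\phi(g))$, so the ratio is bounded by the $h=\phi(g)$ term of $\dbar_H(\rho,u\sigma u^*)$, hence by the whole supremum. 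If $\ell_H(\phi(g))=0$, this is exactly where the hypothesis that $\ell_H$ is a \emph{length} function enters: it forces $\phi(g)=id_H$, so $\rho_{\phi(g)}=\sigma_{\phi(g)}=id$, the numerator is $d^\Uh(id,uu^*)=0$, and the ratio is $0$. In either case the ratio is $\le \dbar_H(\rho,u\sigma u^*)$.

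Taking the supremum over $g$ on the left gives $\dbar_G(\phi^*\rho,\,u(\phi^*\sigma)u^*)\le \dbar_H(\rho,u\sigma u^*)$ for every $u\in\Uh$. Taking the infimum over $u$ on both sides and invoking Formula \eqref{E:dreps3} yields
\[ \dreps_G(\phi^*\rho,\phi^*\sigma)\ \le\ \dreps_H(\rho,\sigma), \]
which is precisely the nonexpansion assertion.

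The only delicate point is the case $\ell_G(g)>0$ but $\ell_H(\phi(g))=0$, where a naive bound would divide by zero; handling it is the reason the hypothesis requires $\ell_H$ to be a length function rather than merely a semi-length. Everything else is a direct translation between the $\sup$-$\inf$ definitions, and there are no issues with the infimum being over the empty set since $V_{\phi^*\rho}=V_\rho$ guarantees that admissible isometries exist on the $G$-side whenever they do on the $H$-side.
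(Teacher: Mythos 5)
Your proof is correct and follows essentially the same route as the paper's: both bound the ratio for each $g$ by using nonexpansion when $\ell_H(\phi(g))\neq 0$ and by observing that the numerator vanishes when $\phi(g)=id_H$ (which is exactly where the hypothesis that $\ell_H$ is a length function enters), then pass the resulting inequality $\dbar(\phi^*\rho,\phi^*\sigma)\leq\dbar(\rho,\sigma)$ through the infimum defining $\dreps$. The only cosmetic difference is that you carry the unitary $u$ explicitly, while the paper states the $\dbar$-inequality once and notes that it persists through the infimum.
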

\begin{proof}  For representations $\rho:H\to U(V_\rho)$ and $\sigma:H\to U(V_\sigma)$, we have
\begin{align*}  \dbar(\phi^*(\rho),\phi^*(\sigma))&=\sup_{\ell_G(g)\neq 0}\frac{d^\Uh(\rho_{\phi_g},\sigma_{\phi_g})}{\ell_G(g)} \\
                                                  &=\sup_{\ell_G(g)\neq 0,\phi g\neq id_H}\frac{d^\Uh(\rho_{\phi g},\sigma_{\phi g})}{\ell_G(g)} \\
                                                  &\leq \sup_{\phi g\neq id_H}\frac{d^\Uh(\rho_{\phi(g)},\sigma_{\phi(g)})}{\ell_H(\phi g)} \\
                                                  &=\sup_{h\in\phi(G),\ell_H(h)\neq 0}\frac{d^\Uh(\rho_h,\sigma_h)}{\ell_H(h)}\leq \dbar(\rho,\sigma).
\end{align*}
This inequality $\dbar(\phi^*(\rho),\phi^*(\sigma))\leq\dbar(\rho,\sigma)$ persists through the infimum \eqref{E:dreps2} which defines $\dreps$, thus we get $d^{\Reps(G)}(\phi^*(\rho),\phi^*(\sigma))\leq d^{\Reps(H)}(\rho,\sigma)$ as desired.
\end{proof}

Recall that the assignment $(G,\ell)\longmapsto(\Reps(G),\dreps)$ still depends on the choice of bi-invariant metric on $\Uh$.  We finish the section by writing down two natural choices for such a metric:
\begin{enumerate}
\item The operator norm $||a||:=\sup_{x\in\hf, ||x||=1}||a(x)||$ for bounded operators on $\hf$ induces a bi-invariant metric, the \textbf{operator metric}:
\[ d^{op}(u,v):=||u-v||,\qquad u,v\in U(\hf).\]
\item Recall that the Hilbert-Schmidt operators $HS(\hf)$ are the bounded operators on a separable Hilbert space $\hf$ such that $tr(A^*A)<\infty$.  For a differentiable path $[0,1]\tof{\gamma}\Uh$ whose derivative $\partial_t\gamma(t)$ is in $HS(\hf)$ for $t\in [0,1]$, there is a notion of arclength:
\begin{equation}\label{E:Length} \Arclength(\gamma):=\int_0^1 \sqrt{tr((\partial_t\gamma)^*(\partial_t\gamma))}dt.
\end{equation}
This induces a bi-invariant metric, the \textbf{arclength metric}, on $U(\hf)$ by the formula:
\[ d^{arc}(u,v):=\inf \{ \Arclength(\gamma) \ |\ \gamma(0)=u, \gamma(1)=v,\ \partial_t\gamma\in HS(\hf)\}. \]
\end{enumerate}
There are plenty of points $u,v\in\Uh$ which cannot be connected by a path $\gamma$ whose derivative is Hilbert-Schmidt, for example $id_\Uh$ and $e^{i\theta}id_\Uh$ for $\theta\in(0,2\pi)$ cannot be so connected.  In such cases $d^{arc}(u,v)$ reduces to an infimum over the empty set, which is $\infty$.  This may seem like a significant drawback, but of course for finite dimensional representations the problem does not occur, and in many cases the benefit of having a Riemannian metric outweigh the benefits of having a finite metric.

\section{Metric Duality}\label{S:Ldd}

As before, let $(G,\ell)$ denote a group with semi-length function and $\dreps$ the associated semi-metric on $\Reps(G)$.  Associated to this data is a length-type function on $\Reps(G)$
\[ \Lreps(\rho):=\dreps(1,\rho) \]
where $1$ denotes the trivial representation of $G$.  From the bi-invariance of $d^\Uh$ this formula reduces to
\begin{equation}\label{E:Lreps}   \Lreps(\rho)=\dbar(1,\rho)=\sup_{\ell(g)\neq 0}\frac{\LUh(\rho_g)}{\ell(g)}. \end{equation}
%Note that a (not necessarily surjective) isometry $V\hookrightarrow \hf$ of Hilbert spaces induces an isometry $U(V)\hookrightarrow U(W)$ for the operator metric and arclength metrics.

Of course $\Objects(\Reps(G))$ is not a group, so $\Lreps$ cannot be a length function in the sense of groups.  However, it is a monoid with respect to tensor product of representations and has an involution (sending a representation $\rho$ to its dual, or conjugate, representation $\bar{\rho}$).  With respect to these structures $\Lreps$ behaves analogously to a length function for groups:
\begin{prop}\label{P:PropertiesOfLreps} Let $(G,\ell)$ be a group with semi-length function and choose the arclength metric on $\Uh$ to form $\Lreps$.  Then for $\rho,\sigma\in\Reps(G)$
\begin{enumerate}
\item $\Lreps(\rho\otimes\sigma)\leq\Lreps(\rho)+\Lreps(\sigma).$
\item $\Lreps(\rho)=\Lreps(\bar{\rho}).$
\item If $\ell$ is finite and vanishes only at the identity, then $\Lreps$ vanishes only at the trivial representation.
\end{enumerate}
\end{prop}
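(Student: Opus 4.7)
I would establish the three items separately using the explicit formula $\Lreps(\rho) = \sup_{\ell(g)\neq 0} \LUh(\rho_g)/\ell(g)$ from \eqref{E:Lreps}. Item (3) is nearly immediate: if $\Lreps(\rho)=0$ then $\LUh(\rho_g)=0$ whenever $\ell(g)\neq 0$, and since $d^{arc}$ is a genuine metric on its finiteness locus (arclength zero forces coincidence), this forces $\rho_g=id$ for all such $g$; combined with the hypothesis that $\ell$ vanishes only at $id_G$, this gives $\rho_g=id$ for every $g\in G$, i.e.\ $\rho$ is trivial.

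Item (2) I would reduce to a symmetry of arclength under complex conjugation. Fix an orthonormal basis of $\hf$. Given any path $\gamma$ in $\Uh$ from $id$ to $\rho_g$ with $\partial_t\gamma\in HS(\hf)$, its pointwise conjugate $\bar\gamma$ (in this basis) is a path from $id$ to $\overline{\rho_g}=\bar\rho_g$. Since $\tr(\bar A^*\bar A)=\overline{\tr(A^*A)}=\tr(A^*A)$, the two paths have identical arclengths; passing to infima gives $\LUh(\bar\rho_g)=\LUh(\rho_g)$ for every $g$, and then \eqref{E:Lreps} yields $\Lreps(\bar\rho)=\Lreps(\rho)$.

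Item (1) is the main content. By \eqref{E:Lreps} it suffices to prove $\LUh((\rho\otimes\sigma)_g)\leq \LUh(\rho_g)+\LUh(\sigma_g)$ for each $g$. Since $(\rho\otimes\sigma)_g=(\rho_g\otimes id_{V_\sigma})(id_{V_\rho}\otimes\sigma_g)$ is a product of two commuting unitaries, the triangle inequality for $d^{arc}$ reduces the problem to the tensor-identity estimate $\LUh(u\otimes id_W)\leq \LUh(u)$ for $u\in U(V)$ and finite-dimensional $W$, together with its symmetric analogue. I would approach this via the exponential parametrization $u=\exp(iH)$ with $\|H\|_{HS}$ near-minimal, constructing a path from $id$ to $u\otimes id_W$ in $\Uh$ whose arclength is bounded by $\|H\|_{HS}$ by exploiting the freedom to embed $V\otimes W$ into the infinite-dimensional $\hf$.

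The main obstacle is precisely this tensor-identity estimate. The obvious path $t\mapsto \exp(itH)\otimes id_W$ in $U(V\otimes W)$ has derivative $iH\otimes id_W$ whose Hilbert--Schmidt norm on $V\otimes W$ picks up a factor $\sqrt{\dim W}$, which overshoots the target bound $\|H\|_{HS}$. So the heart of the proof must involve a less naive path in $\Uh$ that uses the extra room in the ambient infinite-dimensional Hilbert space, in such a way that the identity tensor factor contributes zero arclength rather than $\sqrt{\dim W}$. Constructing and justifying this path is the step I expect to require the most care.
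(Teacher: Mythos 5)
Your treatment of items (2) and (3) is fine and matches the paper's, which dismisses both as immediate from \eqref{E:Lreps}. The problem is item (1), where you have correctly isolated the crux --- the pointwise estimate $\LUh((\rho\otimes\sigma)_g)\leq\LUh(\rho_g)+\LUh(\sigma_g)$, and inside it the tensor-identity bound $\LUh(u\otimes id_W)\leq\LUh(u)$ --- but then deferred exactly that step. That gap cannot be closed: the bound is false for $\dim W\geq 2$, and the $\sqrt{\dim W}$ factor you noticed in the naive path is not an artifact of a poor choice of path. For any path $\gamma$ in $\Uh$ from $1$ to $u$ with Hilbert--Schmidt derivative and any orthonormal vectors $e_1,\dots,e_n$ one has $\|\partial_t\gamma\|_{HS}^2\geq\sum_{j=1}^n\|\partial_t(\gamma(t)e_j)\|^2$, so $\Arclength(\gamma)$ dominates the length of the curve $t\mapsto(\gamma(t)e_1,\dots,\gamma(t)e_n)$ in $\hf^{\oplus n}$, whence $\Arclength(\gamma)\geq\bigl(\sum_j\|ue_j-e_j\|^2\bigr)^{1/2}$. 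Taking $u=e^{i\theta}$ acting on a line and $W=\Cb^2$, so that $u\otimes id_W$ is $e^{i\theta}$ on a two-dimensional subspace, this gives $\LUh(u\otimes id_W)\geq 2\sqrt{2}\,|\sin(\theta/2)|>|\theta|\geq\LUh(u)$ for small $\theta\neq 0$; the extra room in the infinite-dimensional $\hf$ is of no help. Feeding this into \eqref{E:Lreps} even violates item (1) itself: for $G=\Zb$ with $\ell(k)=|k|$, $\rho(k)=e^{ik\epsilon}$ and $\sigma$ the two-dimensional trivial representation, one gets $\Lreps(\sigma)=0$ but $\Lreps(\rho\otimes\sigma)\geq 2\sqrt{2}\,|\sin(\epsilon/2)|>\epsilon=\Lreps(\rho)$ for small $\epsilon>0$.

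For what it is worth, your suspicion here is sharper than the paper's own argument. The paper takes paths $\gamma,\eta$ in $U(V_\rho)$ and $U(V_\sigma)$, uses $\partial_t(\gamma\otimes\eta)=\partial_t\gamma\otimes\eta+\gamma\otimes\partial_t\eta$, and asserts $\|\partial_t\gamma\otimes\eta\|=\|\partial_t\gamma\|$ ``because $\eta$ is unitary.'' That identity is valid for the operator norm, but for the Hilbert--Schmidt norm appearing in \eqref{E:Length} one has $\|\partial_t\gamma\otimes\eta\|_{HS}=\|\partial_t\gamma\|_{HS}\sqrt{\dim V_\sigma}$ --- precisely the factor you flagged. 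So the paper's proof of (1), and the statement itself, are only correct under an additional hypothesis such as $\dim V_\rho=\dim V_\sigma=1$ (which is all that is used later, in the Pontryagin duality example). The honest conclusion is not to hunt for a cleverer path but to restrict the statement or renormalize the arclength metric by dimension.
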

\begin{proof} The second and third statements are immediate from Formula \eqref{E:Lreps}, and incidentally hold for any choice of bi-invariant metric on $\Uh$.  The first statement is true if
\[ \ell^\Uh(\rho_g\otimes\sigma_g)\leq\ell^\Uh(\rho_g)+\ell^\Uh(\sigma_g) \]
holds for all $g\in G$.
And this does hold, because if $\gamma_\rho$ and $\gamma_\sigma$ are paths in $U(V_\rho)$ and $U(V_\sigma)$ respectively, then $\gamma_\rho\otimes\gamma_\sigma:[0,1]\to U(V_\rho\otimes V_\sigma)$ satisfies
\[ \Arclength(\gamma_\rho\otimes\gamma_\sigma)\leq \Arclength(\gamma_\rho)+\Arclength(\gamma_\sigma). \]
Indeed, arclength is computed by integrating the norm $||A||=\sqrt{tr(A^*A)}$ where $A$ is the derivative of the path, and since $\partial_t(\gamma\otimes\eta)=\partial_t\gamma\otimes\eta+\gamma\otimes\partial_t\eta$ we have
\[ ||\partial_t(\gamma\otimes\eta)||  \leq ||\partial_t\gamma\otimes\eta||+||\gamma\otimes\partial_t\eta||=||\partial_t\gamma||+||\partial_t\eta||. \]
(The equality on the right is because $\gamma$ and $\eta$ are unitary.)
\end{proof}
\begin{example}[Pontryagin duality] Let $(G,\ell_G)$ be a locally compact abelian group with finite length function and let $\wh{G}:=\Hom(G,U(1))$ denote the Pontryagin dual.  Define $\Lreps$ using the arclength metric.  If we view $\wh{G}$ as a subset of $\Reps(G)$ then multiplication in $\wh{G}$ corresponds to tensor product in $\Reps(G)$, so Proposition \eqref{P:PropertiesOfLreps} implies that $\wh{\ell}:=\Lreps|_{\wh{G}}$ is a finite length function (in the sense of groups) on $\wh{G}$.  We refer to $\wh{\ell}$ as the \textbf{Pontryagin dual length function}.
\end{example}
The reader can check directly that for $G=\Zb$ with length function $\ell(k):=\alpha|k|$ for $\alpha\in(0,\infty)$, the Pontryagin dual length function on $\wh{\Zb}=U(1)$ makes $U(1)$ the round circle of radius $1/\alpha$.   The following proposition gives an analogous formula for the groups $\Rb^n$ and $\Tb^n$, when they are equipped with left invariant Riemannian metrics:
\begin{prop}\label{P:PontLength}\ \newline
\begin{enumerate}
\item Let $G=\Rb^n$, equipped with the length function $\ell(x):=\sqrt{\langle x, Ax\rangle}$, where $A$ is a positive definite symmetric $n\times n$ matrix and $\langle -, -\rangle$ is the standard inner product.  Then the Pontryagin dual length function on $\wh{\Rb^n}$ is:
\begin{equation}\label{E:ellhat}  \wh{\ell}(\phi_p)=\sqrt{\langle p,A^{-1}p\rangle},\qquad\phi_p(x):=e^{i\langle x,p\rangle},\ p,x\in\Rb^n. \end{equation}
\item Let $\Tb^n=\Rb^n/\Lambda$ (here $\Lambda\simeq \Zb^n$ is a lattice), and view $\wh{\Tb^n}$ as a lattice in $\wh{\Rb^n}$.  Let $\ell$ denote the length function from (1) and write $q_*\ell$ for the quotient length function:
\[ q_*\ell(x+\Lambda):=\inf_{\lambda\in\Lambda}\ell(x+\lambda)\qquad x+\Lambda\in \Tb^n.\]
Then the Pontryagin dual of $q_*\ell$ is $\wh{\ell}$ restricted to $\wh{\Tb^n}$.  That is,
\[ \wh{q_*\ell}(\phi_p)=\sqrt{\langle p,A^{-1}p\rangle},\qquad\phi_p\in\wh{\Tb^n}\subset\wh{\Rb^n}. \]
\end{enumerate}
\end{prop}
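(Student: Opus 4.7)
The plan is to unwind the definition of $\Lreps$ using Formula \eqref{E:Lreps}, and to compute $\ell^{\Uh}$ of each character $\phi_p$ evaluated at a point $x$, then optimize the resulting ratio. Throughout, $\phi_p(x) = e^{i\langle x, p\rangle}$ is a $1$-dimensional representation, so when embedded into $\hf$ via any isometry $a : \Cb \hookrightarrow \hf$ with image subspace $L$ and orthogonal projection $P$ onto $L$, one has $a_*\phi_p(x) = e^{i\langle x, p\rangle} P + (I - P)$. The trivial representation similarly extends to $id_\hf$.

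First I would compute $\ell^{\Uh}(a_*\phi_p(x))$. The linear-in-$t$ path $\gamma(t) = e^{it\langle x, p\rangle} P + (I-P)$ has derivative $\partial_t\gamma = i\langle x, p\rangle e^{it\langle x, p\rangle} P$, which is rank one and hence Hilbert--Schmidt, with $\tr((\partial_t\gamma)^*(\partial_t\gamma)) = |\langle x, p\rangle|^2$; so this path has arclength $|\langle x, p\rangle|$. Going around the short way accounts for the $2\pi$-ambiguity, so
\[ \ell^{\Uh}(a_*\phi_p(x)) = \min_{k \in \Zb} |\langle x, p\rangle + 2\pi k|, \]
and the particular choice of $L$ (or of the conjugating unitary in formula \eqref{E:dreps3}) is immaterial since the computation depends only on the scalar. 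Since the right side is always bounded above by $|\langle x, p\rangle|$, the ratio to be supremized in \eqref{E:Lreps} satisfies
\[ \frac{\ell^{\Uh}(\phi_p(x))}{\ell(x)} \leq \frac{|\langle x, p\rangle|}{\sqrt{\langle x, Ax\rangle}}, \]
and applying the substitution $y = A^{1/2}x$ (or Cauchy--Schwarz directly) this is bounded by $\sqrt{\langle p, A^{-1}p\rangle}$, with equality achieved in the direction $x = A^{-1}p$.

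To see that this upper bound is actually realized in the supremum, replace $x$ by $tx$ for small $t > 0$ along the extremal direction; for $t$ small enough that $|t\langle x, p\rangle| < \pi$, the minimum over $k$ is achieved at $k = 0$, so the ratio equals $|\langle x, p\rangle|/\sqrt{\langle x, Ax\rangle}$ and attains $\sqrt{\langle p, A^{-1}p\rangle}$ in the limit. This proves (1).

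For (2), the point is that $\phi_p \in \wh{\Tb^n}$ is characterized by $\phi_p|_{\Lambda} \equiv 1$, so $\phi_p$ descends to $\Tb^n$ and the unitary-arclength computation above applies verbatim to $\phi_p(x + \Lambda)$. The ratio becomes
\[ \frac{\min_{k}|\langle x, p\rangle + 2\pi k|}{\inf_{\lambda \in \Lambda}\sqrt{\langle x + \lambda, A(x + \lambda)\rangle}}. \]
For $\bar x$ sufficiently near the identity coset the infimum over $\lambda$ is achieved at $\lambda = 0$ (so $q_*\ell(\bar x) = \ell(x)$), and the same small-$t$ argument as in (1) shows that the supremum equals $\sqrt{\langle p, A^{-1}p\rangle}$. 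The upper bound in the other direction is automatic, since replacing $\Lambda$ by $\{0\}$ only makes the denominator larger, so the supremum for $\Tb^n$ is bounded above by that for $\Rb^n$, namely $\sqrt{\langle p, A^{-1}p\rangle}$.

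The main subtlety I anticipate is the careful handling of the $2\pi$-periodic cut in the $U(1)$-arclength --- specifically, arguing that the supremum of the ratio over all $x$ is attained (or approached) in the small-$x$ regime where the straight-line path is the geodesic, rather than at some large $x$ where the $\min_k$ gives an unexpected improvement. The inequality $\min_k|\theta + 2\pi k| \leq |\theta|$ disposes of this cleanly and reduces everything to the linear-algebraic Cauchy--Schwarz optimization.
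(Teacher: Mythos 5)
Your part (1) is correct and is essentially the paper's argument: substitute $y=A^{1/2}x$, bound the ratio by Cauchy--Schwarz using $\min_k|\theta+2\pi k|\leq|\theta|$, and realize the bound along the direction $A^{-1}p$ in the small-$t$ regime where the straight-line path is the geodesic. (Minor remark: for $x=tA^{-1}p$ with $t$ small the ratio equals $\sqrt{\langle p,A^{-1}p\rangle}$ exactly, not merely in a limit.)

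Part (2), however, has a genuine gap: the inequality in your ``automatic'' upper bound points the wrong way. Replacing $\Lambda$ by $\{0\}$ makes the denominator $\inf_{\lambda\in\Lambda}\ell(x+\lambda)$ \emph{larger}, hence makes each ratio \emph{smaller}; so what you have actually shown is $\sup_{\Rb^n}\leq\sup_{\Tb^n}$, i.e.\ $\wh{\ell}(\phi_p)\leq\wh{q_*\ell}(\phi_p)$. That is the easy lower bound (which your small-$t$ argument near the identity coset already gives), and it leaves the nontrivial half --- $\wh{q_*\ell}(\phi_p)\leq\sqrt{\langle p,A^{-1}p\rangle}$ --- unproved. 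The danger is precisely at cosets far from the identity, where $q_*\ell(\bar x)$ can be much smaller than $\ell(x)$ for a given representative and the ratio could a priori blow up. The fix is the periodicity you state but do not use: since $\langle\Lambda,p\rangle\subset 2\pi\Zb$, the numerator satisfies $N(x)=N(x+\lambda)$ for all $\lambda\in\Lambda$, so
\[ \frac{N(x)}{\inf_{\lambda\in\Lambda}\ell(x+\lambda)}=\sup_{\lambda\in\Lambda}\frac{N(x+\lambda)}{\ell(x+\lambda)}\leq\sup_{y\in\Rb^n}\frac{N(y)}{\ell(y)}=\sqrt{\langle p,A^{-1}p\rangle}. \]
Taking the supremum over cosets then gives equality with the $\Rb^n$ answer in one stroke; this is exactly the paper's $\sup_x\sup_\lambda$ manipulation, which handles both directions at once without any ``near the identity'' restriction.
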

\begin{proof}
We start with the first statement.  Rewriting $y=A^{1/2}x\in\Rb^n$ in the definition of $\Lreps$ gives
\[  \wh{\ell}(\phi_p)=\sup_x \frac{\ell^{arc}(e^{i\langle x,p\rangle})}{\sqrt{\langle x,Ax\rangle}}
     =\sup_y\frac{\ell^{arc}(e^{i\langle y,A^{-1/2}p\rangle})}{||y||} \]
This supremum is clearly achieved on the line segment $\{ y=\lambda A^{-1/2}p\ |\ 0<|\lambda|< 2\pi/||A^{-1/2}p||^2\ \}$, and the supremum is
\[ \frac{\ell^{arc}(e^{i\lambda\langle A^{-1/2}p,A^{-1/2}p\rangle})}{|\lambda|\cdot||A^{-1/2}p||}=\sqrt{\langle p,A^{-1}p\rangle} \]
as desired.

Now we show that $\wh{q_*\ell}(\phi_k)=\wh{\ell}(\phi_k)$ for any point $\phi_k\in \wh{\Tb^n}\subset\wh{\Rb^n}$.  Such a point is defined by $\phi_k(x+\Lambda)=e^{i\langle x,k\rangle}$ where $k$ satisfies $\langle \Lambda,k\rangle\subset 2\pi\Zb$.   Then
\[ \wh{q_*\ell}(\phi_k)\equiv\sup_{x+\Lambda\in \Tb^n}\ \frac{\ell^{arc}(e^{i\langle x,k\rangle})}{\inf_{\lambda\in\Lambda}\ell(x+\lambda)}
=\sup_{x}\sup_\lambda\frac{ \ell^{arc}(e^{i\langle x+\lambda,k\rangle})}{\ell(x+\lambda)}=\sup_{y\in\Rb^n}\frac{\ell^{arc}(e^{i\langle y,k\rangle})}{\ell(y)}\equiv\wh{\ell}(\phi_k). \]
(We substituted $y=x+\lambda$ and used the fact that $e^{i\langle x,k\rangle}=e^{i\langle x+\lambda,k\rangle}$.)
\end{proof}
An immediate consequence of Equation \eqref{E:ellhat} is that those length functions on $\Rb^n$ satisfy \[ \wh{\wh{\ell}}=\ell. \] In particular, one can recover the original length function on $\Rb^n$ from knowledge of $\Lreps$.

This leads to the general question which we will now address: from which semi-length groups can one recover the semi-length function from $\Lreps$?

\begin{defn} Let $(G,\ell)$ be a group with semi-length function and $\Lreps$ a corresponding length on $\Reps(G)$.  The \textbf{double-dual} of $\ell$, written $\Ldd$, is the function on $G$ given by:
\[  \Ldd(g):=\sup_{\Lreps(\rho)\neq 0}\frac{\LUh(\rho_g)}{\Lreps(\rho)}\in [0,\infty]. \]
\end{defn}
In fact $\Ldd$ is a semi-length function, as follows immediately from the properties of $\LUh$.
\begin{prop}\label{P:Ldd} Let $(G,\ell)$ be a group with length function.  Then:
\begin{enumerate}
\item $\Ldd(g)\leq\ell(g)$ for all $g\in G$.
\item If $G$ admits an isometric representation then $\Ldd=\ell$.
\item More generally, $\Ldd=\ell$ if for each $g\in G$ there exists a representation $\rho$ such that
\[ \Ldd(g)=\frac{\ell^{U(\hf)}(\rho_g)}{\Lreps(\rho)}. \]

\end{enumerate}
\end{prop}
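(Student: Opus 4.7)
My plan is to handle the three parts in sequence, with each reducing to a direct manipulation of the supremum definitions and with (2) and (3) producing explicit witnesses that lower-bound $\Ldd$ by $\ell$.

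For (1), I would expand the formula $\Lreps(\rho) = \sup_{\ell(h)\neq 0}\LUh(\rho_h)/\ell(h)$ and observe that, for any fixed $g$ with $\ell(g)\neq 0$, this supremum is at least the single term at $h=g$, giving $\Lreps(\rho)\geq \LUh(\rho_g)/\ell(g)$. Rearranging to $\LUh(\rho_g)/\Lreps(\rho)\leq \ell(g)$ and taking the sup over $\rho$ (with $\Lreps(\rho)\neq 0$) yields $\Ldd(g)\leq \ell(g)$. The only edge case is $g=id_G$: both sides vanish because $\ell$ is a length function.

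For (2), the strategy is to use an isometric representation $\rho_0$ (satisfying $\LUh(\rho_0(g))=\ell(g)$ for every $g$) as a single uniform witness. This immediately forces $\Lreps(\rho_0)=\sup \ell(g)/\ell(g)=1$, so substituting $\rho_0$ into the sup defining $\Ldd(g)$ yields $\Ldd(g)\geq \LUh(\rho_0(g))/\Lreps(\rho_0)=\ell(g)$, matching the upper bound from (1).

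Part (3) is the pointwise generalization: rather than a single uniform witness, one has for each $g$ a (possibly different) representation $\rho^{(g)}$ achieving the sup in the definition of $\Ldd(g)$. Tracing through the argument in (1), the inequality $\LUh(\rho_g)/\Lreps(\rho)\leq \ell(g)$ becomes an equality precisely when the sup defining $\Lreps(\rho)$ is attained at $h=g$, so I would read the hypothesis as providing, for each $g$, a representation whose Lipschitz constant is realized at $g$ itself; substituting $\rho^{(g)}$ into the definition of $\Ldd$ then produces $\Ldd(g)\geq \ell(g)$. The main obstacle I anticipate is essentially bookkeeping: the outer sup over $\rho$ in $\Ldd$ and the inner sup over $h$ hidden inside $\Lreps$ interact, and one must be careful to translate ``the outer sup is achieved'' into ``the inner sup is achieved at $h=g$'' without circularity. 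Once this identification is in hand, each inequality is routine.
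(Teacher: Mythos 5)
Your treatment of parts (1) and (2) coincides with the paper's: part (1) comes from the single inequality $\Lreps(\rho)\geq \LUh(\rho_g)/\ell(g)$ rearranged inside the supremum (the paper handles the degenerate case by splitting off $\Ldd(g)=0$ rather than $g=id_G$, but the substance is the same), and part (2) uses the isometric representation as the uniform witness with $\Lreps(\rho)=1$, exactly as in the paper.

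The only point of divergence is part (3), and there you have put your finger on a genuine ambiguity rather than introduced a gap of your own. As literally printed, the hypothesis says that the supremum defining $\Ldd(g)$ is \emph{attained} at some $\rho$; as you observe, attainment of that outer supremum does not by itself force the inner supremum defining $\Lreps(\rho)$ to be attained at $h=g$, and without that the inequality from part (1) never becomes an equality. The paper's proof simply declares (3) ``immediate from the definitions,'' which it is only under the reading you eventually adopt: for each $g$ there should exist $\rho$ with $\ell(g)=\LUh(\rho_g)/\Lreps(\rho)$, equivalently (when $\LUh(\rho_g)\neq 0$) that the supremum defining $\Lreps(\rho)$ is realized at $h=g$. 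That reading is the one that actually generalizes (2), where the isometric representation satisfies $\LUh(\rho_g)=\ell(g)$ and $\Lreps(\rho)=1$, and under it the conclusion follows in one line from part (1): the value $\ell(g)$ is achieved in the supremum defining $\Ldd(g)$, which is bounded above by $\ell(g)$. So the ``circularity'' you anticipate never needs to be confronted; you should simply state the reinterpretation of the hypothesis explicitly and close part (3) with that one-line argument rather than leaving it as an unresolved obstacle.
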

\begin{proof}
In Statement (1), if $\Ldd(g)=0$ there is nothing to prove.  On the other hand, if $\Ldd(g)\neq 0$ then by definition there is some representation $\rho$ of finite length with $\rho(g)\neq 0$.  This ensures that there are no zeroes in denominators in following equations which show that $\Ldd(g)\leq\ell(g)$:
\begin{align*} \Ldd(g)&:=\sup_{\Lreps(\rho)\neq 0}\frac{\LUh(\rho_g)}{\Lreps(\rho)} \\
                             &=\sup_{\Lreps(\rho)\neq 0}\left(\frac{\LUh(\rho_g)}{ \left( \sup_{h\neq id}\frac{\LUh(\rho_h)}{\ell(h)}\right)}\right) \\
				&\leq\sup_{\Lreps(\rho)\neq 0}\frac{\LUh(\rho_g)}{\left( \frac{\LUh(\rho_g)}{\ell(g)}\right)}=\ell(g).
\end{align*}
Now Statement (2) follows easily, indeed, any representation satisfying $\ell(g)=\LUh(\rho_g)$ for all $g\in G$ satisfies $\Lreps(\rho)=1$, so that
\[ \Ldd(g)\equiv\sup_{\Lreps(\sigma)\neq 0}\frac{\LUh(\sigma_g)}{\Lreps(\sigma)}\geq \frac{\LUh(\rho_g)}{\Lreps(\rho)}=\ell(g) \]
which forces the equality $\Ldd=\ell$.  Statement (3) is immediate from the definitions.
\end{proof}

\begin{defn} A semi-length function $\ell$ on a group $G$ will be called \textbf{stable} if $\Ldd=\ell$.  The notion of stability depends on the choice of bi-invariant metric $d^\Uh$ on $\Uh$; when we wish to make this explicit we will say $d^\Uh$-stable. \end{defn}

\begin{prop}\label{P:CompactStability}\
\begin{enumerate}
\item Every subgroup of the unitary group $\Uh$ admits a finite stable length function.
\item The translation invariant Riemannian metrics on compact Lie groups are stable.
\end{enumerate}
\end{prop}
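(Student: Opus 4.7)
My plan is to handle (1) first and then bootstrap it to prove (2).

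For (1), given a subgroup $G\le\Uh$, I would set $\ell:=\LUh|_G$. With the operator metric on $\Uh$ (for which $\LUh\le 2$) this is a finite length function, and with the arclength metric it is still a semi-length function on $G$. The inclusion $\iota:G\hookrightarrow\Uh$, viewed as a unitary representation of $G$ on $\hf$, satisfies $\LUh(\iota_g)=\ell(g)$ tautologically, so Proposition~\ref{P:Ldd}(2) immediately gives $\Ldd=\ell$ and hence the desired finite stable length function.

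For (2), let $G$ be a compact Lie group with a translation-invariant Riemannian metric $\ell$ arising from an $\operatorname{Ad}$-invariant inner product $\langle\cdot,\cdot\rangle$ on the Lie algebra $\gf$. The approach is to reduce to (1) by realizing $G$ inside $\Uh$ via a representation whose pulled-back metric reproduces $\ell$. By the Peter-Weyl theorem $G$ admits a faithful finite-dimensional unitary representation, and by combining irreducibles with suitable multiplicities one can construct a faithful $\rho:G\hookrightarrow U(V)\hookrightarrow\Uh$ whose differential $d\rho$ pulls the Hilbert-Schmidt inner product on $\uf(V)$ back to a positive scalar multiple of $\langle\cdot,\cdot\rangle$. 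This uses that the cone of $\operatorname{Ad}$-invariant inner products on $\gf$ is finite-dimensional---one-dimensional on each simple ideal, and handled on the central torus via Proposition~\ref{P:PontLength}---so pullback inner products from irreducibles span it.

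The main obstacle I anticipate is checking that the restricted length $\LUh|_{\rho(G)}$ coming from (1) actually equals $\ell$ rather than only approximating it infinitesimally. For $g$ within the injectivity radius the shortest log $X\in\gf$ of $g$ maps under $d\rho$ to the shortest log of $\rho(g)$ in $\uf(V)$, giving $\LUh(\rho_g)=\Lreps(\rho)\cdot\ell(g)$ and the desired equality in the ratio of Proposition~\ref{P:Ldd}(3). Past the cut locus, however, a shorter log in $\uf(V)$ can exist outside the image of $d\rho$; I would address this by choosing the representation adaptively, selecting for each such $g$ an irreducible whose highest weight is aligned with the minimizing direction of $g$ and whose central character obstructs any $U(V)$-shortcut. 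Combining these pointwise choices with Proposition~\ref{P:Ldd}(3) yields $\Ldd\ge\ell$, and together with Proposition~\ref{P:Ldd}(1) this gives $\Ldd=\ell$, completing the proof.
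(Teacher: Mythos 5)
Your treatment of (1) is correct and is exactly the paper's argument: restrict the operator length $\LUh$ to $G$, observe that the inclusion $G\hookrightarrow\Uh$ is then an isometric representation, and invoke Proposition~\eqref{P:Ldd}(2); finiteness is inherited from the operator length.

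For (2) there is a genuine gap. (The paper's own proof of this part is the bare assertion that $G$ embeds isometrically into $\Uh$ with the arclength metric, so you are right to interrogate that step --- but your repair does not close it.) Two problems. First, pullbacks of the Hilbert--Schmidt form under irreducibles do not ``span'' the invariant cone in the sense you need: on a simple ideal the cone is a ray, each irreducible pulls the form back to a fixed (index) multiple of the Killing form, and direct sums only add these multiples, so the achievable infinitesimal stretch factors form a countable set. Already for $G=U(1)$ they are exactly $\sqrt{k_1^2+\cdots+k_m^2}$, so a circle of radius $1/2$ admits no isometric representation whatsoever; one is forced into your scalar $c$ and Proposition~\eqref{P:Ldd}(3). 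That leads to the second and more serious problem, which you correctly identify but do not solve: an infinitesimally isometric, even totally geodesic, embedding need not be globally distance-preserving, so the required identity $\LUh(\rho_g)=c\,\ell(g)$ can fail past the cut locus. Concretely, for $SU(3)\subset U(3)$ with Hilbert--Schmidt metrics, the central element $e^{2\pi i/3}I$ has $d^{arc}(1,e^{2\pi i/3}I)=2\pi/\sqrt{3}$ in $U(3)$, realized by the path $t\mapsto e^{2\pi i t/3}I$ which exits $SU(3)$, whereas the intrinsic distance in $SU(3)$ is $2\sqrt{6}\,\pi/3$; so here $\LUh(\rho_g)/\Lreps(\rho)<\ell(g)$ strictly. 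Your proposed remedy --- choosing, for each such $g$, an irreducible ``whose highest weight is aligned with the minimizing direction'' and ``whose central character obstructs any shortcut'' --- is a heuristic, not an argument: no such irreducible is constructed, no proof is given that it excludes shortcuts in $U(V)$, and no verification is made that the resulting ratio in Proposition~\eqref{P:Ldd}(3) attains $\ell(g)$ rather than something smaller. As written, statement (2) is not established.
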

\begin{proof} For the first statement, let $G$ be a subgroup of $U(\hf)$ and define $\ell_G:=\LUh|_G$, where $\LUh$ is the operator length.  Then the inclusion $G\hookrightarrow\Uh$ is an isometric representation.  Thus $\ell_G$ is stable, and it is finite because the operator length is so.

Now let $(G,\ell)$ be a compact Lie group with Riemannian metric.  Then the second statement follows because there is an isometry $G\hookrightarrow U(\hf)$, where $U(\hf)$ is equipped with the arclength metric.
\end{proof}

But many groups which are not subgroups of $\Uh$, and in particular non-compact groups, may also have stable length functions. The integers provide a simple example of this:
\begin{example}  The group $\Zb$ with the length function satisfying $\ell(n)=\ell(1)\cdot|n|$ satisfies $\Ldd=\ell$.  Indeed,
\[ \frac{\LUh(\rho_n)}{\ell(1)|n|}\leq \frac{\LUh(\rho_1)|n|}{\ell(1)|n|}=\frac{\LUh(\rho_1)}{\ell(1)},\]
so that
\[ \Lreps(\rho)\equiv\frac{\ell(\rho_1)}{\ell(1)}\text{ and thus }\Ldd(n)=\sup_\rho\frac{\ell(\rho_n)\ell(1)}{\ell(\rho_1)}.   \]
But the character $\rho(k)=e^{ikx}$ for any $x\in(0,\pi/|n|)$ satisfies $\ell(\rho_n)=\ell(\rho_1)\cdot|n|$ for the arclength metric and realizes the supremum, thus $\Ldd(n)=\ell(n)$.
\end{example}
In fact since the supremum is achieved with an irreducible representation, we have shown the stronger statement that $\wh{\wh{\ell}}(n)=\ell(n)$.
It is stronger in the following sense:
\begin{prop}\label{P:LhathatLdd}
Let $(G,\ell)$ be a locally compact abelian group with finite length function. Then \[ \wh{\wh{\ell}}\leq\Ldd\leq\ell.\]
\end{prop}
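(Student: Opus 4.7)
The plan is to recognize the statement as a chain of two independent inequalities. The right-hand inequality $\Ldd\le\ell$ has already been established in Proposition~\ref{P:Ldd}(1), so the only real content is $\wh{\wh{\ell}}\le\Ldd$.

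For that, I would first unfold what $\wh{\wh{\ell}}$ means. By Pontryagin duality $\wh{\wh{G}}\cong G$, with $g\in G$ identified with the character $\wh{g}:\wh{G}\to U(1)$ defined by $\wh{g}(\chi)=\chi(g)$. Iterating the definition of the Pontryagin dual length from the preceding example, one gets
\[ \wh{\wh{\ell}}(g)=\Lreps_{\wh{G}}(\wh{g})=\sup_{\chi\in\wh{G},\ \wh{\ell}(\chi)\neq 0}\frac{\LUh(\chi(g))}{\wh{\ell}(\chi)}. \]

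Next I would observe that every character $\chi\in\wh{G}$ is, by definition, a $1$-dimensional object of $\Reps(G)$, so in particular $\wh{\ell}(\chi)=\Lreps(\chi)$. A small compatibility check (using the arclength metric, together with the fact that for the standard embedding $U(1)\hookrightarrow\Uh$ obtained by padding with $1$'s, the derivative of the embedded path vanishes on the orthogonal complement) shows that $\LUh(\chi(g))$ coincides with $\LUh(\chi_g)$ when $\chi$ is regarded as a representation into $\Uh$. With this identification, the supremum above becomes the $\Ldd$-style supremum restricted to characters, and hence
\[ \wh{\wh{\ell}}(g)=\sup_{\chi\in\wh{G}\subset\Reps(G),\ \Lreps(\chi)\neq 0}\frac{\LUh(\chi_g)}{\Lreps(\chi)}\ \le\ \sup_{\rho\in\Reps(G),\ \Lreps(\rho)\neq 0}\frac{\LUh(\rho_g)}{\Lreps(\rho)}=\Ldd(g). \]

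The ``main obstacle'' is not really an obstacle: the entire content is the bookkeeping observation that $\wh{\wh{\ell}}$ ranges only over $1$-dimensional representations while $\Ldd$ ranges over all of $\Reps(G)$, so the latter can only be larger. The one verification actually required is the arclength compatibility under $U(1)\hookrightarrow\Uh$, which is routine; finiteness and local compactness of $(G,\ell)$ enter only insofar as they guarantee that $\wh{\wh{G}}$ can be identified with $G$ and that $\wh{\ell}$ is the length used on $\wh{G}$ for the second iteration.
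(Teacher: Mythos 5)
Your proposal is correct and matches the paper's proof, which simply notes that $\wh{\wh{\ell}}\leq\Ldd$ is immediate from the definitions (the supremum defining $\wh{\wh{\ell}}$ runs only over characters, a subset of the representations entering $\Ldd$) and that $\Ldd\leq\ell$ is Proposition \eqref{P:Ldd}. Your unfolding of the Pontryagin double dual and the $U(1)\hookrightarrow\Uh$ compatibility check just makes explicit what the paper leaves implicit.
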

\begin{proof} The inequality $\wh{\wh{\ell}}\leq\Ldd$ is immediate from the definitions, and $\Ldd\leq \ell$ is Proposition \eqref{P:Ldd}.\end{proof}
While it is not easy in general to check whether or not a given metric is stable, there is a canonical method for producing stable metrics:
\begin{prop} Let $(G,\ell)$ be a group with finite length function.  Then on the set
\[ \Reps_c(G):=\{\rho\in \Reps(G)\ |\ \Lreps(\rho)\neq \infty\} \]
we have $(\Ldd)^{\Reps}=\Lreps$.  Consequently the double-dual is always a stable semi-length function.
\end{prop}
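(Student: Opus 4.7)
The plan is to establish $(\Ldd)^\Reps = \Lreps$ on $\Reps_c(G)$ by proving both inequalities and then derive stability of $\Ldd$ using Proposition \ref{P:Ldd}(1) together with this equality.

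For the inequality $(\Ldd)^\Reps(\rho) \le \Lreps(\rho)$, I would argue straight from the definition of the double-dual. Fix $\rho \in \Reps_c(G)$; if $\Lreps(\rho) = 0$ then $\rho$ is trivial (using that $\ell$ is a genuine length function and $\LUh$ vanishes only at the identity), so both sides are zero. Otherwise $0 < \Lreps(\rho) < \infty$, and the very definition of $\Ldd(g)$ as a supremum guarantees
\[ \Ldd(g) \;\geq\; \frac{\LUh(\rho_g)}{\Lreps(\rho)} \qquad \text{for every } g \in G. \]
Rearranging and taking the sup over $\{g : \Ldd(g) \neq 0\}$ yields $(\Ldd)^\Reps(\rho) \leq \Lreps(\rho)$.

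The reverse inequality $(\Ldd)^\Reps(\rho) \ge \Lreps(\rho)$ will use Proposition \ref{P:Ldd}(1), which gives $\Ldd \leq \ell$. The subtle point is that the sup defining $\Lreps$ ranges over all $g$ with $\ell(g) \neq 0$, whereas the sup defining $(\Ldd)^\Reps$ ranges only over $g$ with $\Ldd(g) \neq 0$, a potentially smaller set. I would show that the ``extra'' $g$'s (those with $\ell(g) \neq 0$ but $\Ldd(g) = 0$) contribute nothing to $\Lreps(\rho)$: assuming $\Lreps(\rho) > 0$ is finite, the definition $\Ldd(g) = \sup_\sigma \LUh(\sigma_g)/\Lreps(\sigma)$ forces $\LUh(\rho_g)/\Lreps(\rho) \le \Ldd(g) = 0$, hence $\LUh(\rho_g) = 0$. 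This is precisely where the hypothesis $\rho \in \Reps_c(G)$ is essential and is, in my view, the main (albeit small) obstacle. Having dispatched the extra terms, the inequality $\Ldd \le \ell$ gives $\LUh(\rho_g)/\Ldd(g) \ge \LUh(\rho_g)/\ell(g)$ for the remaining $g$, and taking suprema completes the proof.

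For the consequence that $\Ldd$ is stable, I need $(\Ldd)^{dd} = \Ldd$. One direction $(\Ldd)^{dd} \le \Ldd$ is immediate from Proposition \ref{P:Ldd}(1) applied with $\Ldd$ in place of $\ell$. For the reverse, I would observe that representations $\rho$ with $\Lreps(\rho) = \infty$ contribute $0$ to the sup defining $\Ldd(g)$, so $\Ldd(g)$ may equivalently be computed over $\rho \in \Reps_c(G)$ with $\Lreps(\rho) > 0$. On this class the main equality of the proposition gives $(\Ldd)^\Reps(\rho) = \Lreps(\rho)$, so
\[ (\Ldd)^{dd}(g) \;=\; \sup_{(\Ldd)^\Reps(\rho) \neq 0} \frac{\LUh(\rho_g)}{(\Ldd)^\Reps(\rho)} \;\geq\; \sup_{\substack{\rho \in \Reps_c(G) \\ \Lreps(\rho) > 0}} \frac{\LUh(\rho_g)}{\Lreps(\rho)} \;=\; \Ldd(g), \]
closing the loop.
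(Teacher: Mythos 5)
Your proof is correct and takes essentially the same route as the paper's: you obtain $(\Ldd)^{\Reps}\leq\Lreps$ by specializing the supremum defining $\Ldd(g)$ to $\sigma=\rho$, and the reverse bound by showing that the $g$ with $\ell(g)\neq 0$ but $\Ldd(g)=0$ contribute nothing to $\Lreps(\rho)$ (this is the paper's Equation \eqref{E:prop}, and you correctly identify it as the one place $\rho\in\Reps_c(G)$ is needed) before invoking $\Ldd\leq\ell$. Your stability argument is more explicit than the paper's one-word ``consequently''; the only caution is that applying Proposition \eqref{P:Ldd}(1) with $\Ldd$ in place of $\ell$ presumes $\Ldd$ vanishes only at the identity, whereas a priori it is only a semi-length function --- but the paper elides the same point.
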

\begin{proof}  First we show that $\Lreps\leq(\Ldd)^{\Reps}$.  Because $\ell$ is finite, $\Lreps(\rho)=0$ only when $\rho$ is the trivial representation, and in this case $(\Ldd)^{\Reps}(\rho)=0$ as well.  Now consider the case that $\Lreps(\rho)\neq 0,\infty$.
We claim that:
\begin{equation}\label{E:prop} \Lreps(\rho)\equiv\sup_{\ell(h)\neq 0}\frac{\ell(\rho_h)}{\ell(h)}= \sup_{\Ldd(h)\neq 0}\frac{\ell(\rho_h)}{\ell(h)}. \end{equation}
This is because $\Ldd(g)=0$ only if $\ell(\rho_g)$ vanishes for every $\rho\in\Reps_c(G)$, and in this case $g$ clearly does not contribute to the supremum.  By Proposition \eqref{P:Ldd}, $\Ldd(g)\leq\ell(g)$ for all $g$, so Equation \eqref{E:prop} implies that  $\Lreps(\rho)\leq(\Ldd)^{\Reps}(\rho)$ as desired.

To get $(\Ldd)^{\Reps}\leq\Lreps$ for $\Lreps(\rho)\neq 0,\infty$ we expand:
\[ (\Ldd)^{\Reps}(\rho)=\sup_{\Ldd(g)\neq 0}\frac{\ell(\rho_g)}{\Ldd(g)}
    =\sup_{\Ldd(g)\neq 0}\frac{\ell(\rho_g)}{\sup_{ \Lreps(\sigma)\neq 0,\infty}\frac{\ell(\sigma_g)}{\Lreps(\sigma)}}
    \leq\Lreps(\rho). \]
The last inequality is given by fixing $\sigma=\rho$.
\end{proof}

In summary, from the data of a semi-length group $(G,\ell)$, we produce a semi metric $\dreps$ and associated length-type function $\Lreps$ on $\Reps(G)$.  The function $\Lreps$ is analogous to a length function in the sense that if satisfies a triangle inequality with respect to tensor product of representations, and assigns the same value to a representation and its dual.  From $\Lreps$ one can produce a new semi-length function $\Ldd$ on $G$ which is dominated by $\ell$.  If $\ell$ is actually a length function, then $\Ldd$ is stable (that is, $(\Ldd)^{dd}=\Ldd$) because $\Lreps=(\Ldd)^{\Reps}$.

It is pleasing to frame these methods in the context of Tannaka-Krein duality:  Let $\Hilb$ denote the category of  Hilbert spaces and isometries.  If a category over Hilbert spaces $\Tc\Tof{F}\Hilb$ is equipped with a function $\ell^\Tc:Ob\Tc\to[0,\infty]$, then we obtain a semi-length function on the group of natural isomorphisms of $F$ by the formula:
\[ \ell^F:\Aut(F)\to [0,\infty],\qquad \ell^F(\eta):=\sup_{\ell^\Tc(c)\neq 0}\frac{\ell^{U(F(c))}(\eta(c))}{\ell^\Tc(c)}. \]
(As always, $\ell^{U(F(c))}$ is determined by a choice as in Section \eqref{S:dreps}.)

Now if $\Tc:=\Reps_{fin}(G)$ is the (monoidal) category of continuous finite dimensional unitary representations of a compact group $G$, then one has a topology on $\Aut(F)$ and a canonical realization of $G$ as a subspace of $\Aut(F)$.  In this case $\ell^F|_G$ is a semi-length function on $G$.  Furthermore, if $\ell^\Tc$ is the semi-length function on representations induced from a semi-length $\ell$ on $G$ (that is $\ell^\Tc=\Lreps$), then by definition $\ell^F|_G\equiv \Ldd$.  In particular, the results of this section imply:
\begin{cor} Let $(G,\ell)$ be a compact group with stable length function.  Then Tannaka-Krein duality incorporates metric data via the associations
\[ (G,\ell)\rightsquigarrow (\Tc,\ell^\Tc):=( \Reps_{fin}(G),\Lreps), \]
\[ (\Tc\Tof{F}\Hilb,\ell^\Tc)\rightsquigarrow (G,\ell^F|_G\equiv\ell). \]
\end{cor}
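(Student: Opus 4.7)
The plan is to unpack the Tannaka-Krein embedding $G\hookrightarrow\Aut(F)$ in the formula for $\ell^F$, identify the result with $\Ldd$, and invoke the stability hypothesis $\Ldd=\ell$.

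First, I would recall the canonical realization: for the forgetful functor $F:\Reps_{fin}(G)\to\Hilb$, a group element $g\in G$ corresponds to the natural isomorphism $\eta_g$ whose component at $\rho:G\to U(V_\rho)$ is the unitary $\eta_g(\rho):=\rho(g)$. Naturality of $\eta_g$ in $\rho$ is equivalent to $G$-equivariance of morphisms in $\Reps_{fin}(G)$, and $g\mapsto\eta_g$ is a group homomorphism because each $\rho$ is one. Substituting $\eta_g$ into the general definition of $\ell^F$ stated just above the corollary, with $\ell^\Tc=\Lreps$, yields
\[ \ell^F(\eta_g)=\sup_{\Lreps(\rho)\neq 0,\ \rho\in\Reps_{fin}(G)}\frac{\LUh(\rho_g)}{\Lreps(\rho)}, \]
which has exactly the shape of $\Ldd(g)$ but with the supremum restricted to finite dimensional representations.

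Next I would reconcile this restricted supremum with the one defining $\Ldd$, which originally ranges over all of $\Reps(G)$. For compact $G$ the Peter-Weyl theorem decomposes every unitary representation as a Hilbert space direct sum of finite dimensional irreducible subrepresentations, and both $\LUh$ and $\Lreps$ behave as suprema over such summands (directly in the operator case, and via block diagonal Hilbert-Schmidt paths in the arclength case). Consequently the ratio $\LUh(\rho_g)/\Lreps(\rho)$ on a direct sum is controlled by its values on finite dimensional summands, so the restricted and unrestricted suprema agree and $\ell^F|_G(g)=\Ldd(g)$. The stability hypothesis then gives $\Ldd=\ell$, hence $\ell^F|_G=\ell$, which is the content of the second association. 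The first association is tautologically the construction of Section \ref{S:dreps} applied to $(G,\ell)$.

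The only real technical wrinkle I anticipate is the reduction from $\Reps(G)$ to $\Reps_{fin}(G)$ in the supremum defining $\Ldd$. For the operator metric this is immediate from $\LUh(\oplus_i u_i)=\sup_i\LUh(u_i)$, but for the arclength metric it requires that the infimum in the arclength between block diagonal unitaries be realized by block diagonal paths, a compatibility which again rests on Peter-Weyl for compact $G$. Everything else in the argument is a clean unpacking of the definitions already introduced in the section.
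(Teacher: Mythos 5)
Your proof follows the same route as the paper: the paper's entire justification is the paragraph preceding the corollary, which realizes $g\in G$ as the natural transformation $\eta_g$ with $\eta_g(\rho)=\rho(g)$, asserts $\ell^F|_G\equiv\Ldd$ ``by definition,'' and invokes stability to conclude $\Ldd=\ell$; your first and last paragraphs reproduce exactly this. The one place you go beyond the paper is the reconciliation of the supremum over $\Reps_{fin}(G)$ appearing in $\ell^F$ with the supremum over all of $\Reps(G)$ in the definition of $\Ldd$ --- a point the paper silently elides and which is legitimate to worry about. However, your mechanism for that step is not correct for the arclength metric: the arclength of a block-diagonal unitary is not the supremum of the arclengths of its blocks, since the Hilbert--Schmidt norm of a block-diagonal derivative is the $\ell^2$-combination of the block norms. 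For example, on $G=U(1)\times U(1)$ with the flat product metric, the representation $\chi_{(1,0)}\oplus\chi_{(0,1)}$ attains a ratio $\LUh(\rho_g)/\Lreps(\rho)=\sqrt{\theta^2+\phi^2}$ that strictly exceeds the ratio $\max(|\theta|,|\phi|)$ achieved by either summand, so the quantity $\LUh(\rho_g)/\Lreps(\rho)$ is genuinely not controlled summand-by-summand. The reduction to $\Reps_{fin}(G)$ still holds, but for a different reason in the arclength case: a representation of a compact Lie group with infinitely many nontrivial isotypic components has $\ell^{arc}(\rho_g)=\Lreps(\rho)=\infty$ and is excluded from the supremum, so only finitely many nontrivial blocks (hence effectively finite-dimensional representations) contribute; your ``sup over summands'' identity is valid as stated only for the operator metric. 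Since the paper itself treats $\ell^F|_G\equiv\Ldd$ as definitional, this does not put you at odds with the paper's argument, but the arclength justification should be repaired as above.
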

%\begin{flushright}$\square$ \end{flushright}
\section{Examples}\label{S:Examples}
To conclude this note we would like give some rather open-ended examples, whose unifying theme is that of metric data from a group encoded in various associated categories.

\subsection{Metrics on flat bundles via the Riemann-Hilbert correspondence} Let $(M,G)$ be a Riemannian manifold and let $\widetilde{M}\tof{p}M$ be the universal cover, with its lifted Riemannian metric.  Choosing an identification $\pi_1(M,x)\simeq p^{-1}(x)$ for some $x\in M$ induces a translation invariant metric on $\pi_1(M,x)$.  As we have seen, this in turn determines a semi-metric $\dreps$ on $\Reps(\pi_1(M,x))$.

The Riemann-Hilbert correspondence provides an equivalence between the unitary representations $\Reps_{fin}(\pi_1(M,x))$ and the category $\Flat(M)$ of Hermitian vector bundles on $M$ with flat unitary connection:
\[  Hol:\Flat(M)\leftrightarrows\Reps_{fin}(\pi_1(M,x)):Assoc. \]
The equivalence sends a bundle with connection to its holonomy representation at $x$, and in the other direction, sends a representation $(\rho:\pi\to U(V))$ to the associated bundle $\tilde{M}\times_\rho V$, which admits a canonical flat unitary connection.)

Transporting $\dreps$ across the Riemann-Hilbert correspondence gives a metric $\dFlat$ on the objects of $Flat(M)$, so we have a map:
\[ \{\text{ Riemannian metrics on } M\ \}\longrightarrow \{\text{ Semi-metrics on }\Flat(M)\ \}. \]
When $M$ is a compact Riemann surface of genus $g>1$, $\Flat(M)$ is precisely the set of stable degree 0 bundles on $M$.  Its moduli space admits a K\"{a}hler metric (see \cite{NaSe}) which might be interesting to compare with $\dFlat$.

\subsection{Metric T-duality}
Consider a torus $T:=\Rb^n/\Lambda$.  The associated \textbf{T-dual torus}, denoted $T^\vee$, is by definition the Pontryagin dual group of $\Lambda$.  In view of the inclusion
 \[T^\vee\equiv\wh{\Lambda}\hookrightarrow \Reps(\pi_1(T))\tof{\sim} \Flat(T),\]
the construction from the previous paragraph yields a map
\[ \{\text{ Riemannian metrics on } T\ \}\longrightarrow \{\text{ Riemannian metrics on } T^\vee\ \}\quad d\mapsto \dFlat|_{T^\vee}. \]
From the perspective of T-duality one would like this be a bijection. Unfortunately this cannot be, because on $T$ we start with an arbitrary Riemannian metric, whereas the resulting metric on $T^\vee$ is by construction translation invariant.

In any case, the metric aspect of T-duality only applies to translation invariant metrics.  If we restrict to these from the beginning then there are several routes by which one can produce a metric on the dual torus.  For instance, letting $d_A$ denote the metric specified by a positive definite symmetric bi-linear form $A$ on $\Rb^n$, we have an isomorphism:
\[ \{\text{ translation inv. metrics on } T\ \}\simeq\{\text{ translation inv. metrics on } T^\vee\ \}\quad d_A\mapsto d_{A^{-1}}. \]
This is the correct map for T-duality, as dictated by the Buscher rules \cite{Bu}.   On the other hand, $d_A\mapsto d_{A^{-1}}$ can be interpreted as a metric duality because $d_A$ and $d_{A^{-1}}$ are Pontryagin dual in the sense of Proposition \eqref{P:PontLength}, once they are lifted to the respective covers $\Rb^n$ and $\wh{\Rb^n}$.

\begin{rem} In the more general context of $T$-duality for torus bundles, metric data can induce so-called $B$-fields on the dual side.  It should be possible to include this feature in our framework by encoding metric and B-field data as a ``generalized Riemannian structure" (defined in \cite{CaGu}).
\end{rem}
\subsection{Metrics via the Peter-Weyl correspondence}
Let $G$ be a compact Lie group and $\wh{G}$ the discrete space of isomorphism classes of irreducible representations of $G$.  We denote by $\Vect(\wh{G})$ the category whose objects are vector bundles over $\wh{G}$ and whose arrows are bundle isomorphisms.  Of course a vector bundle on $\wh{G}$ is just a collection of vector spaces $\{V_\phi\}_{\phi\in\wh{G}}$ of varying dimension.

There is no map between $G$ and $\wh{G}$, so there is no direct way to transport metric data from one side to the other.  However there is an equivalence between the representations of $G$ and the vector bundles on $\wh{G}$, so it is possible to transport metric data by encoding it in these categories.  A convenient way to describe the equivalence is as follows:

 Let $W=\oplus_{\phi\in\wh{G}}V_\phi$ be the vector bundle over $\wh{G}$ whose fiber over a point $\phi$ is the vector space $V_\phi$ of the representation.  It has an obvious $G$-action preserving the bundle structure.  We have a functor:
\[ \phi_W:\Reps_{fin}(G)\longmapsto\Vect(\wh{G}) \]
\[ V\longmapsto \Hom_G(V,W) \]
This is easily seen to be an equivalence; Schur's lemma implies $\Hom_G(V,W)$ is a vector bundle whose fiber over $\phi\in\wh{G}$ has dimension equal to the number of copies of $V_\phi$ occurring in $V$.

Using this equivalence, we see that semi-metrics on $G$ give rise to semi-metrics on $Vect(\wh{G})$, and vice-versa.

\subsection{Metrics on representations and Quantum Gromov-Hausdorff distance}

In our last example we present a class of metrics on the irreducible representations of a compact Lie group which we define using Rieffel's quantum Gromov-Hausdorff distance \cite{Rie1}, \cite{Rie2}.

The setup is as follows (see \cite{Rie3} for the full details):  If $G$ is a compact Lie group with length function $\ell$, then any irreducible representation $\rho:G\to U(V_\rho)$ gives rise to a quantum metric space $(A_\rho,L_\rho)$.  The underlying normed space is $A_\rho:=\End_\Cb(V_\rho)$ (equipped with, for example, the Hilbert-Schmidt norm) and the Lip-norm is given by
\[ L_\rho:A_\rho\to [0,\infty]\qquad L_\rho(a):=\sup_{g\neq id_G}\frac{||[\rho_g,a]||}{\ell(g)}. \]
Thus for any such $(G,\ell)$, we have a metric on irreducible representations given by
\[ \Irreps(G)\ni \rho,\tau\longmapsto d^{QGH}(A_\rho,A_\tau), \]
where $d^{QGH}$ stands for quantum Gromov-Hausdorff distance.

On the other hand from $(G,\ell)$ one can form the semi-metric $\dreps$.  But then it is immediately tempting to seek a relationship between $\dreps(\rho,\tau)$ and $d^{QGH}(A_\rho,A_\tau)$.
A relation of the form $\dreps\leq d^{QGH}$ (or vice versa) would provide information about convergence of matrix algebras to homogenous spaces (as in \cite{Rie3}), and this could be of great value because $\dreps$ is more easily computable than $d^{QGH}$.

But recalling that $\dreps$ has some freedom coming from its dependence on the initial choice of a metric $d^{U(\hf)}$, we could ask a stronger question:  Is it possible to make two different choices of $d^{U(\hf)}$, one leading to $\dreps\leq d^{QGH}$ and the other leading to $d^{QGH}\leq \dreps$?  We leave this as an open problem.

\end{document}